\documentclass[11pt,leqno]{amsart}
\usepackage[utf8]{inputenc}

\oddsidemargin = 0cm \evensidemargin = 0cm \textwidth = 16cm

\usepackage{amssymb,amsfonts,amscd,amsbsy,amsmath,amsthm,amsrefs}
\usepackage{graphicx}
\usepackage{float}
\usepackage{color}   %May be necessary if you want to color links
\usepackage[
colorlinks=true,
linkcolor=blue,
citecolor=blue,
urlcolor=blue,
%pagebackref,
]{hyperref}

\setlength\parindent{0pt} % sets indent to zero
\setlength{\parskip}{10pt} % changes vertical space between paragraphs

\newtheorem{theorem}{Theorem}%[section]
\newtheorem{lemma}[theorem]{Lemma}

\title{On Kuzmin-Landau Lemma. }
\author{J. Arias de Reyna}
\address{Univ.~de Sevilla \\
Facultad de Matem\'aticas \&\ Imus \\
c/Tarfia, sn
 \\
41012-Sevilla \\
Spain} 
%\thanks{Supported by  MINECO grant MTM2015-63699-P}
\email{arias@us.es}

\date{\today}
\begin{document}
\maketitle

Obtaining bounds for sum of exponentials $\sum e^{2\pi i a_k}$ with $a_k$ real numbers is crucial in many theorems on Number Theory. One of the most useful result of this type was proven in 1927 by 
Kuzmin, \cite{K}, in Russian. Kuzmin wrote: ``Inequalities of this type were introduced for the first time by Vinogradov.  Further proofs are found  in the work of Landau and van der Corput.  The author's proof is based on entirely different principles and gives a better bound.''

\begin{lemma}[Kuzmin version]\label{L:KL}
 Let $a_1< a_2< \cdots < a_n$ be real numbers such that the differences $\delta_k=a_{k+1}-a_k$ are increasing and satisfies $\theta\le \delta_1\le \cdots\le \delta_{n-1}\le 1-\theta$ for some $0<\theta\le \frac12$. 
Then we have
\begin{equation}
\Bigl|\sum_{k=1}^n e^{2\pi i a_k}\Bigr|\le \frac{2}{\sin\pi\theta}\le\frac{1}{\theta}.
\end{equation}
\end{lemma}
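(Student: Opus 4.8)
The plan is to bound $S:=\sum_{k=1}^n e^{2\pi i a_k}$ by Abel summation. Writing $e(x):=e^{2\pi i x}$, for $1\le k\le n-1$ I would use the identity
\[
e(a_k)=\frac{e(a_{k+1})-e(a_k)}{e(\delta_k)-1},
\]
so that $\sum_{k=1}^{n-1}e(a_k)=\sum_{k=1}^{n-1}d_k\bigl(e(a_{k+1})-e(a_k)\bigr)$ with $d_k:=1/(e(\delta_k)-1)$. The engine of the proof is the elementary evaluation
\[
d_k=\frac{1}{e(\delta_k)-1}=-\tfrac12-\tfrac{i}{2}\cot(\pi\delta_k),
\]
which follows from $e(\delta_k)-1=2i\sin(\pi\delta_k)\,e(\delta_k/2)$. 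Two features make everything work: the real part of $d_k$ is the constant $-\tfrac12$, and, since $\delta_k$ increases while $\cot$ decreases on $(0,\pi)$, the imaginary part increases monotonically. Consequently each difference $d_k-d_{k+1}=-\tfrac{i}{2}(\cot\pi\delta_k-\cot\pi\delta_{k+1})$ is purely imaginary of one fixed sign, so the $|d_k-d_{k+1}|$ telescope to $\tfrac12(\cot\pi\delta_1-\cot\pi\delta_{n-1})$.

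Next I would carry out the summation by parts, which gives
\[
\sum_{k=1}^{n-1}e(a_k)=d_{n-1}e(a_n)-d_1e(a_1)+\sum_{k=1}^{n-2}(d_k-d_{k+1})\,e(a_{k+1}),
\]
and then restore the missing term $e(a_n)$. The decisive move is not to estimate $e(a_n)$ on its own but to absorb it into the boundary term, $e(a_n)+d_{n-1}e(a_n)=(1+d_{n-1})e(a_n)$, and to observe that
\[
|1+d_{n-1}|=\Bigl|\tfrac12-\tfrac{i}{2}\cot\pi\delta_{n-1}\Bigr|=\frac{1}{2\sin\pi\delta_{n-1}}=|d_{n-1}|,
\]
so the fold costs nothing. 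Applying the triangle inequality together with the telescoped sum then yields
\[
|S|\le\frac{1}{2\sin\pi\delta_{n-1}}+\frac{1}{2\sin\pi\delta_1}+\tfrac12\bigl(\cot\pi\delta_1-\cot\pi\delta_{n-1}\bigr).
\]

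Finally I would group the two endpoints and simplify with the half-angle identities $\frac{1+\cos x}{\sin x}=\cot\frac x2$ and $\frac{1-\cos x}{\sin x}=\tan\frac x2$, obtaining
\[
|S|\le\tfrac12\cot\frac{\pi\delta_1}{2}+\tfrac12\tan\frac{\pi\delta_{n-1}}{2}.
\]
Since $\delta_1\ge\theta$ and $\delta_{n-1}\le 1-\theta$, and $\tan\frac{\pi(1-\theta)}{2}=\cot\frac{\pi\theta}{2}$, monotonicity of $\cot$ and $\tan$ gives $|S|\le\cot\frac{\pi\theta}{2}=\frac{1+\cos\pi\theta}{\sin\pi\theta}\le\frac{2}{\sin\pi\theta}$, and the last bound $\frac{2}{\sin\pi\theta}\le\frac1\theta$ is immediate from $\sin\pi\theta\ge2\theta$ on $(0,\tfrac12]$. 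The step I expect to be the real obstacle is precisely this endpoint bookkeeping: a naive triangle inequality that discards $e(a_n)$ separately overshoots, producing a spurious additive $1$, and when some $\delta_k>\tfrac12$ the sign change of $\cot$ makes the crude estimate fail outright. The sharp constant hinges on the combination $(1+d_{n-1})e(a_n)$ and on packaging the two boundary contributions as a cotangent and a tangent of half-angles.
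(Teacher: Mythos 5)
Your proof is correct and is essentially Landau's arithmetical proof given in the paper: your multipliers $d_k=1/(e(\delta_k)-1)=-\tfrac12-\tfrac{i}{2}\cot\pi\delta_k$ are exactly Landau's factors $\frac{ie^{-ib_{k+1}}}{2\sin b_{k+1}}$ up to sign, and the telescoping of cotangent differences, the absorption of the endpoint terms, and the half-angle grouping all coincide with his argument. A pleasant bonus is that your endpoint bookkeeping lands directly on the sharp constant $\cot\frac{\pi\theta}{2}$, which is precisely part (a) of Landau's sharpened version stated later in the paper.
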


Kuzmin's proof was entirely geometrical and it was displayed into four pages of the article.  Not much later, in 1928, Landau gave another proof of Kuzmin Lemma \cite{L1}.  Landau's proof fitted into a footnote of his paper.  How on earth could Landau arrive at such a concise proof?  The answer is simple: he carefully read Kuzmin's geometrical proof and translated it into an arithmetical form.   Edmund Landau was a faithful disciple of Weierstrass, the master of the  \emph{arithmetization of Analysis}, which he taught in his  lectures and was exercised  by his disciples. Landau just took the task of placing a geometrical proof into  arithmetical terms.

Comparing these two proofs is a good opportunity to reflect on the powerful tool that the 
arithmetization of Analysis allowed. We first give Kuzmin's geometrical 
proof and then Landau's arithmetical proof.

%%%%%%%%%%%%%%%%%%%%

\begin{proof}[Kuzmin's geometrical proof of Lemma \ref{L:KL}]
For $n<3$ the Lemma is easily proved, so assume $n\ge3$. Consider the polygonal chain with vertex at the partial sums
\[A_0=0,\quad A_m=\sum_{k=1}^m e^{2\pi i a_k}.\]
Let  $C_m$ be the center of the circle passing through $A_{m-1}$, $A_m$, $A_{m+1}$.
We bound the total sum $A_n$ in the following way 
\[\sum_{k=1}^n e^{2\pi i a_k}=A_n-A_0=A_n-C_{n}+\sum_{m=2}^{n} (C_{m}-C_{m-1})+C_1-A_0.\]

Let $M_m$ be the center of the segment $A_{m-1}A_m$. The angle 
$\theta_m:=\widehat{M_mC_mM}_{m+1}$ is equal to  the angle of the segments $A_{m-1}A_m$ and $A_{m}A_{m+1}$. Hence, it is equal to the angle between $e^{2\pi i a_{m+1}}$ and $e^{2\pi i a_{m}}$. Therefore $\theta_m=2\pi(a_{m+1}-a_m)$. The triangle  $C_mA_mM_m$ is rectangle, so that  $C_mM_m=\frac12\cot\frac{\theta_m}{2}$.  We have, by definition of $C_m$, that $C_mM_m=C_mM_{m+1}$. It follows that the length of the segment joining
$C_m$ and $C_{m+1}$ is the difference between two segments situated on  $C_mM_{m+1}$. Since 
the increments are increasing,  the difference $\cot\frac{\theta_{m}}{2}-\cot\frac{\theta_{m+1}}{2}>0$.  
Therefore
\[|C_{m+1}-C_m|=\frac12\cot\frac{\theta_{m}}{2}-\frac12\cot\frac{\theta_{m+1}}{2}>0.\]
\begin{figure}[H]
\begin{center}
\includegraphics[width=\hsize]{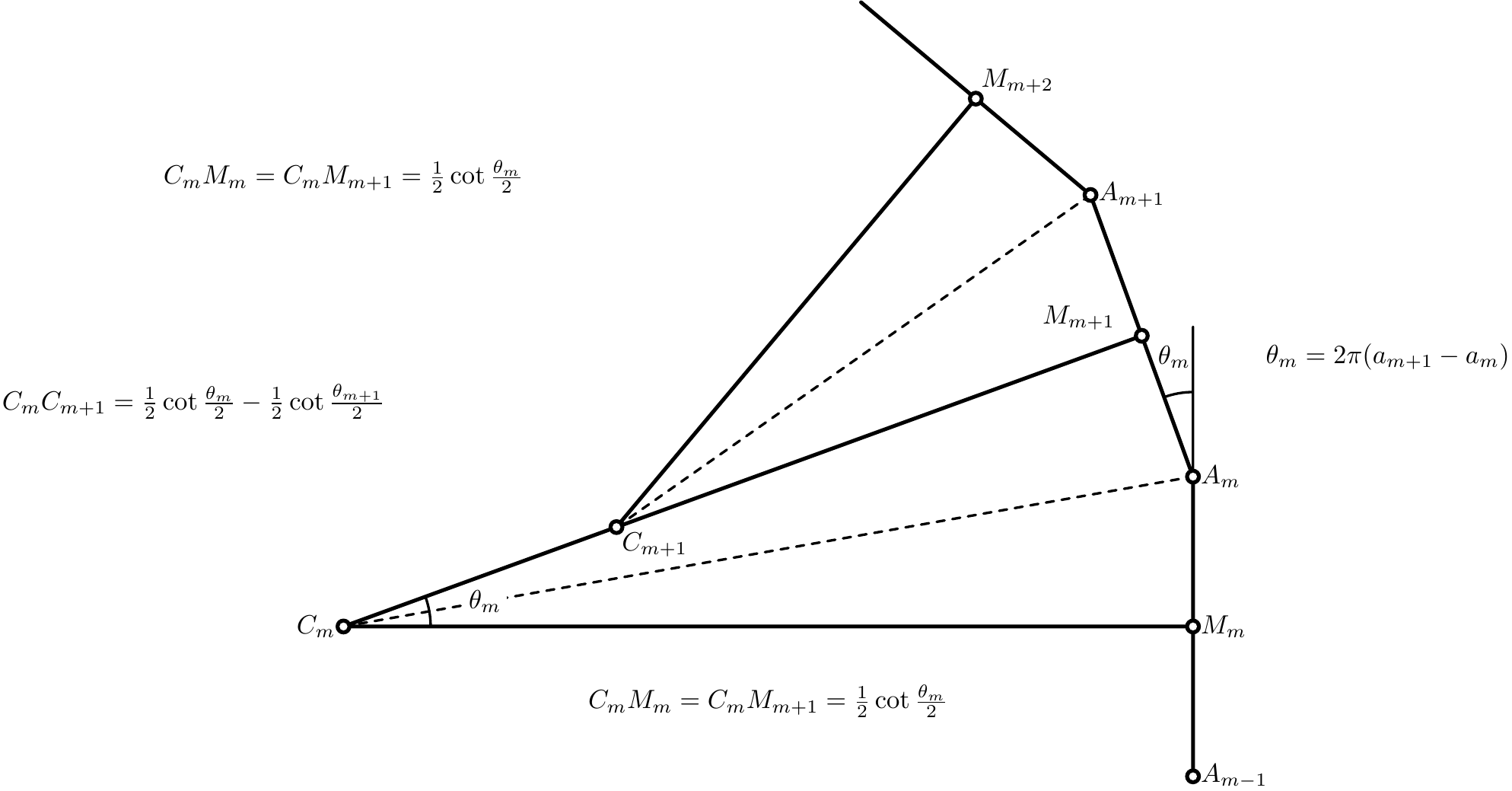}
%\caption{@}
\label{default}
\end{center}
\end{figure}
Notice also in the figure that $C_m$ is equidistant from   $A_{m-1}$, $A_m$ and $A_{m+1}$. Hence, the triangle $C_mA_mM_m$ shows that
\[|A_{m-1}-C_m|=|A_m-C_m|=|A_{m+1}-C_m|=\frac12\csc\frac{\theta_m}{2}.\]
Now, we may bound the total sum 
\begin{align*}
\Bigl|\sum_{k=1}^n e^{2\pi i a_k}\Bigr|&\le |A_n-C_{n-1}|+\sum_{m=2}^{n-1} |C_{m}-C_{m-1}|+|C_1-A_0|\\
&=\frac12\csc\frac{\theta_{n-1}}{2}+\sum_{m=2}^{n-1} (\frac12\cot\frac{\theta_{m-1}}{2}-\frac12\cot\frac{\theta_{m}}{2})+\frac12\csc\frac{\theta_1}{2}\\
&=\frac12(\csc\frac{\theta_{n-1}}{2}-\cot\frac{\theta_{n-1}}{2})+
\frac12(\cot\frac{\theta_{1}}{2}+\csc\frac{\theta_{1}}{2})\\
&=\frac{1-\cos\frac{\theta_{n-1}}{2}}{2\sin\frac{\theta_{n-1}}{2}}+\frac{1+\cos\frac{\theta_1}{2}}{2\sin\frac{\theta_1}{2}}.
\end{align*}
Since 
\[\pi\theta\le\frac{\theta_1}{2}\le\frac{\theta_{n}}{2}\le \pi(1-\theta),\]
it follows that 
\[\Bigl|\sum_{k=1}^n e^{2\pi i a_k}\Bigr|\le \frac{2}{\sin\pi\theta}\le \frac{1}{\theta}.\qedhere\]
\end{proof}

%%%%%%%%%%%%%%%%%%%%

\begin{proof}[Landau's arithmetical  proof of Lemma \ref{L:KL}] 
The case $n=1$ is trivial. For $n\ge2$ put 
$b_k=\pi(a_k-a_{k-1})$ for $2\le k\le n$ and $e_k=e^{2\pi i a_k}$ for $1\le k\le n$, then
\begin{align*}
\sum_{k=1}^n e_k&=\Bigl(e_1+e_2\frac{ie^{-ib_2}}{2\sin b_2}\Bigr)+\sum_{k=2}^{n-1}
\Bigl(e_k-e_k\frac{ie^{-ib_k}}{2\sin b_k}+e_{k+1}\frac{ie^{-ib_{k+1}}}{2\sin b_{k+1}}\Bigr)+
\Bigl(e_n-e_n\frac{i e^{-ib_n}}{2\sin b_n}\Bigr)\\
&=e_1\Bigl(1+\frac{ie^{ib_2}}{2\sin b_2}\Bigr)+\sum_{k=2}^{n-1}e_k\Bigl(1-\frac{ie^{-ib_k}}{2\sin b_k}+\frac{ie^{ib_{k+1}}}{2\sin b_{k+1}}\Bigr)+e_n\Bigl(1-\frac{ie^{-ib_n}}{2\sin b_n}\Bigr)\\
&=e_1\frac{ie^{-i b_2}}{2\sin b_2}-\frac{i}{2}\sum_{k=2}^{n-1}e_k(\cot b_k-\cot b_{k+1})-
e_n\frac{ie^{ib_n}}{2\sin b_n},
\end{align*}
\begin{align*}
\Bigl|\sum_{k=1}^n e_k\Bigr|&\le \frac{1}{2\sin b_2}+\frac{1}{2}\sum_{k=2}^{n-1}(\cot b_k-\cot b_{k+1})+\frac{1}{2\sin b_n}\\
&=\frac{1+\cos b_2}{2\sin b_2}+\frac{1-\cos b_n}{2\sin b_n}<\frac{2}{\sin\pi\theta}\le \frac{1}{\theta}.\qedhere
\end{align*}
\end{proof}
For the benefit of the reader,  let us make three remarks.  To pass from the first to the second line, note that
\[
e_k e^{-i b_k}=e^{2\pi i a_k}e^{-\pi i (a_k-a_{k-1})}=e^{\pi i a_k+\pi i a_{k-1}}=
e^{2\pi i a_{k-1}}e^{\pi i (a_k-a_{k-1})}=e_{k-1}e^{ib_k}.\]
To pass from the second to the third line,  note that
\[1+\frac{i e^{ib}}{2\sin b}=1-\frac{e^{ib}}{2i\sin b}=\frac{e^{ib}-e^{-ib}-e^{ib}}{2i\sin b}=
\frac{ie^{ib}}{2\sin b}\]
\[\frac12+\frac{i e^{ib}}{2\sin b}=\frac12-\frac{e^{ib}}{2i\sin b}=\frac{\frac12e^{ib}-\frac12e^{-ib}-e^{ib}}{2i\sin b}=-\frac{\cos b}{2i\sin b}\]

An important point, when taking the absolute value in the forth line, note that 
\[|\cot b_k-\cot b_{k+1}|=\cot b_k-\cot b_{k+1},\]
because $b_{k+1}\ge b_k$ by hypothesis and $\cot x$ decreases in $(0,\pi)$ from $+\infty$ to $-\infty$.

This terse proof of Kuzmin's Lemma is not Landau's  most important contribution to the Lemma. In his note \cite{L2}, the  paper following his note  \cite{L1} in the same issue of the journal, he gives  the following sharp version of  Lemma \ref{L:KL}.

\begin{lemma}[Landau's version]
Let $a_1< a_2< \cdots < a_n$ with $n>1$ be real numbers such that the differences $\delta_k=a_{k+1}-a_k$ are increasing and satisfies $\theta\le \delta_1\le \cdots\le \delta_{n-1}\le 1-\theta$ for some $0<\theta\le \frac12$. 
\begin{itemize}
\item[(a)] We have
$\displaystyle{S:=\Bigl|\sum_{k=1}^n e^{2\pi i a_k}\Bigr|\le \cot\frac{\pi\theta}{2}}$.
\item[(b)] For $\theta=1/2$ and for each positive fraction $\theta<\frac12$ with odd numerator and denominator, 
there are choices of $a_1,\dots, a_n,$ for which  $S=\cot\frac{\pi\theta}{2}$.
\item[(c)] For any other values of $\theta$ with $0<\theta<\frac12$, we have $S<\cot\frac{\pi\theta}{2}$.
\item[(d)] For any $\theta$ with $0<\theta\le\frac12$ and each $\varepsilon>0$,
there are choices of $a_1,\dots, a_n,$ for which
$S>\cot\frac{\pi\theta}{2}-\varepsilon$.
\end{itemize}
\end{lemma}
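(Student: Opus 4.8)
The plan is to prove the four parts separately, building everything on the identity and estimate already established in Landau's proof. I write that identity as
\[
\sum_{k=1}^n e_k = T_1+\sum_{k=2}^{n-1}T_k+T_n,
\]
where $T_1=e_1\frac{ie^{-ib_2}}{2\sin b_2}$, $T_k=-\frac{i}{2}e_k(\cot b_k-\cot b_{k+1})$ and $T_n=-e_n\frac{ie^{ib_n}}{2\sin b_n}$, with $b_k=\pi(a_k-a_{k-1})$ and $\pi\theta\le b_2\le\cdots\le b_n\le\pi(1-\theta)$. For (a) I would only sharpen the last line. Landau's computation already gives $S\le\frac{1+\cos b_2}{2\sin b_2}+\frac{1-\cos b_n}{2\sin b_n}$, and the half-angle identities $\frac{1+\cos b}{2\sin b}=\frac12\cot\frac b2$ and $\frac{1-\cos b}{2\sin b}=\frac12\tan\frac b2$ turn this into $S\le\frac12\cot\frac{b_2}{2}+\frac12\tan\frac{b_n}{2}$. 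Since $\cot$ and $\tan$ are monotone on $(0,\frac\pi2)$, since $b_2\ge\pi\theta$ and $b_n\le\pi(1-\theta)$, and since $\tan\frac{\pi(1-\theta)}{2}=\cot\frac{\pi\theta}{2}$, each term is at most $\frac12\cot\frac{\pi\theta}{2}$, giving $S\le\cot\frac{\pi\theta}{2}$.

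For (b) I would exhibit explicit extremal configurations. For $\theta=\frac12$ the constraints force $\delta_k\equiv\frac12$; with $n$ odd the sum collapses to $\pm e^{2\pi i a_1}$, so $S=1=\cot\frac\pi4$. For an odd/odd fraction $\theta=p/q<\frac12$ I would set $m=(q-1)/2$, take $\delta_1=\cdots=\delta_m=\theta$ and $\delta_{m+1}=\cdots=\delta_{n-1}=1-\theta$ with $n=q$. Then every $T_k$ with $2\le k\le n-1$ vanishes except $T_{m+1}$ (consecutive differences agree elsewhere), and $|T_1|+|T_{m+1}|+|T_n|=\frac{1}{\sin\pi\theta}+\cot\pi\theta=\cot\frac{\pi\theta}{2}$. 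It then remains to check that $T_1,T_{m+1},T_n$ all have argument $\frac\pi2-\pi\theta$; this reduces to $(2m+1)\theta=p$ being an odd integer and to $a_n\in\Z$, both guaranteed by the choice of $m$ and $n$.

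Part (c) is where the real work lies, and I expect it to be the main obstacle. I would trace the equality case through (a). Equality in $\frac12\cot\frac{b_2}{2}+\frac12\tan\frac{b_n}{2}\le\cot\frac{\pi\theta}{2}$ forces both terms to be maximal, i.e. $\delta_1=\theta$ and $\delta_{n-1}=1-\theta$, while equality in the triangle inequality forces all non-vanishing $T_k$ to share a single argument. For $\theta<\frac12$ this makes $\delta_1=\theta\ne 1-\theta=\delta_{n-1}$, so the differences must jump upward somewhere (in particular $n\ge3$). Taking the first jump at index $j$, one has $\delta_1=\cdots=\delta_{j-1}=\theta$, hence $a_j=(j-1)\theta$ and $T_j\ne0$; equating $\arg T_j$ with $\arg T_1=\frac\pi2-\pi\theta$ yields $(2j-1)\theta\equiv1\pmod 2$, so $(2j-1)\theta$ is an odd integer and $\theta$ is a ratio of odd integers. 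Consequently, for every other $\theta$ equality is impossible and $S<\cot\frac{\pi\theta}{2}$. The delicate point is precisely this bookkeeping of the arguments of the $T_k$, together with the observation that the \emph{first} jump alone already obstructs equality.

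Finally, (d) I would deduce from (b) by approximation. Any configuration admissible for some $\theta''\ge\theta$ is automatically admissible for $\theta$, since $[\theta'',1-\theta'']\subseteq[\theta,1-\theta]$. Ratios of odd integers are dense in $(0,1)$ (for odd $q$, the nearest odd $p$ to $\theta q$ satisfies $|p/q-\theta|\le 1/q$), so for $\theta<\frac12$ and $\varepsilon>0$ I can choose an odd/odd fraction $\theta''\in(\theta,\frac12)$ with $\cot\frac{\pi\theta''}{2}>\cot\frac{\pi\theta}{2}-\varepsilon$ and invoke the extremal configuration from (b) for $\theta''$; it is admissible for $\theta$ and already attains $S=\cot\frac{\pi\theta''}{2}$. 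For $\theta=\frac12$ the equality case of (b) gives $S=1$ directly. I expect (a), (b) and (d) to be essentially mechanical once the argument computation underlying (c) has been carried out carefully.
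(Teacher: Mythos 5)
Your proposal is correct and follows essentially the same route as the paper: it reuses Landau's decomposition to sharpen (a), exhibits the same two-run extremal configuration (a block of steps $\theta$ followed by a block of steps $1-\theta$, $q$ points in all) for (b), traces equality through the triangle inequality and the first jump of the differences to conclude that $(2j-1)\theta$ is an odd integer in (c), and approximates $\theta$ from above by odd/odd fractions in (d). The only cosmetic differences are your use of half-angle monotonicity in (a) instead of the bounds $|\cos b_k|\le\cos\pi\theta$, $\sin b_k\ge\sin\pi\theta$, and your verification of (b) by argument alignment of the three surviving terms $T_1,T_{m+1},T_n$ rather than by summing the two geometric progressions directly.
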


\begin{proof}
(a) We only have to modify the last line of the  proof of Lemma 1. We have seen  that
\[\Bigl|\sum_{k=1}^n e_k\Bigr|\le \frac{1+\cos b_2}{2\sin b_2}+\frac{1-\cos b_n}{2\sin b_n}.\]
Since $\pi(1-\theta)\le b_k\le \pi\theta$, we have $|\cos b_k|\le \cos\pi\theta$  and $\sin b_k\ge\sin\pi\theta$, so that
\[S\le \frac{1+\cos \pi\theta}{\sin \pi\theta}=\cot\frac{\pi\theta}{2}.\]

(b) For $\theta=\frac12$, the choice $a_1=0$, $a_2=\frac12$, $a_3=1$, gives
\[|1+e^{\pi i}+e^{2\pi i}|=1.\]
For $0<\theta=\frac{2M+1}{2N+1}<\frac12$ with $M\ge0$ and $N\ge1$ integers, the choice
\[a_k=\begin{cases}
k\theta & \text{for $0\le k\le N-1$},\\
N\theta+(k-N)(1-\theta) & \text{for $N\le k\le 2N$},
\end{cases}\]
gives
\[\Bigl|\sum_{n=0}^{N-1}e^{2\pi i n\theta}+e^{2\pi i N\theta}\sum_{m=0}^{N}
e^{2\pi im(1-\theta)}\Bigr|= \Bigl|\sum_{n=0}^{N-1}e^{2\pi i n\theta}+\sum_{m=0}^{N}
e^{2\pi im\theta}\Bigr|\]
\[=\Bigl|\frac{1-e^{2\pi i N\theta}+1-e^{2\pi i (N+1)\theta}}{1-e^{2\pi i \theta}}\Bigr|=
\Bigl|\frac{1+e^{-\pi i\theta}+1+e^{\pi i \theta}}{1-e^{2\pi i \theta}}\Bigr|=
\frac{1+\cos\pi\theta}{\sin\pi\theta}.\]

(c) Assume that $0<\theta<\frac12$ and that 
\[S=\frac{1+\cos\pi\theta}{\sin\theta}.\]
Then the second  inequality in
\[\Bigl|\sum_{k=1}^n e_k\Bigr|\le \frac{1+\cos b_2}{2\sin b_2}+\frac{1-\cos b_n}{2\sin b_n}
\le \frac{1+\cos\theta}{2\sin\theta}+\frac{1+\cos\theta}{2\sin\theta}\]
is indeed an equality. Thus, $b_2=\pi\theta$ and $b_n=-\pi\theta$ (here  $n\ge3$). Hence, that not all differences
$b_3-b_2$, \dots, $b_n-b_{n-1}$, are null. Let $N>0$ be the first integer with $b_{N+2}-b_{N+1}>0$. Then $b_2=\cdots b_{N+1}=\pi\theta$. 

Since
\[
\Bigl|e_1\frac{ie^{-i b_2}}{2\sin b_2}-\frac{i}{2}\sum_{k=2}^{n-1}e_k(\cot b_k-\cot b_{k+1})-
e_n\frac{ie^{ib_n}}{2\sin b_n}\Bigr|=\frac{1}{2\sin b_2}+\frac{1}{2}\sum_{k=2}^{n-1}(\cot b_k-\cot b_{k+1})+\frac{1}{2\sin b_n}
\]
we have that
\[e_1\frac{ie^{-i b_2}}{2\sin b_2} \text{ and } -\frac{i}{2}e_{N+1}(\cot b_{N+1}-\cot b_{N+2})\]
lie in the same direction. Therefore $e_1 e^{-ib_2}=-e_{N+1}$, that is,
\[e^{2\pi i a_1}e^{-\pi i \theta}=-e^{2\pi i a_{N+1}}\]
\[\pi a_{N+1}-\pi a_1=\sum_{n=2}^{N+1}b_n=N\pi\theta.\]
Therefore 
\[e^{-\pi i \theta}=-e^{2\pi i N\theta},\]
That is, $e^{\pi i(2N+1)\theta}=-1$, and so $(2N+1)\theta$ is an odd integer.

(d) Let $0<\theta<\frac12$ and $\varepsilon>0$ be given. Pick some $\theta'$ a fraction with odd numerator and denominator, and  such that $\theta<\theta'<\frac12$, and $\cot\frac{\pi\theta'}{2}>\cot\frac{\pi\theta}{2}-\varepsilon$. The choice given in (b), for which $S=\cot\frac{\pi\theta'}{2}$,  satisfies $\theta\le a_2-a_1\le a_n-a_{n-1}\le 1-\theta$ and $S>\cot\frac{\pi\theta}{2}-\varepsilon$.
\end{proof}

%%%%%%%%%%%%%%%%

Some comments are in order.  In the 1930s, Landau faced unpopularity amongst nazi's students in Göttingen. They rejected his un-German style, unbearable to their German feelings, evidenced in his definition of $\pi$ as $2\tau$, where $\tau>0$ is the first zero of $\cos x$.
Comparing Landau's version with  Kuzmin's original proof may help understanding Landau's unpopularity. The paradox is that few things were more German than the arithmetization of Analysis.

In 1958 Mordell gave a beautiful extension of these results, \cite{M}. Referring to results on this topic by  van del Corput, Kuzmin, Landau, Jarn\'{\i}k, Popken, Karamata and Tomic, Mordell wrote:
``All these authors prove their results geometrically except that Landau translates the geometrical argument into a transformation of series. Simple as his method is, it does not really
reveal what underlies these results.'' The paradox is that Mordell's method is entirely arithmetical.

Landau's version of the Kuzmin-Landau Lemma seems to be somewhat forgotten in 
today's  literature. 
References abound where the bound $\cot\frac{\pi\theta}{2}$ is substituted  by $\frac{1}{\theta}$. 
It is true that for many applications the extra information may not be needed. 
Proofs of  the best constant can be found; with no mention, however, to Landau's result (which is written in German).  
Since $\cot\frac{\pi\theta}{2}\le \frac{2}{\pi\theta}$, we have $|S|\le \frac{2}{\pi\theta}$.
Kuzmin and Landau  \cite{L2} proved that $A=2/\pi$ is the best possible constant
in $|S|\le \frac{A}{\theta}$. It has been claimed (in a paper published
in a reputed  journal)  that $|S|\le \frac{1}{\pi\theta}+1$; this is false.
Unfortunately, this false Lemma is often quoted. Part of my motivation for writing this note was
to recover attention on Landau's sharp result.

%%%%%%%%%%%%%%%

\end{document}